\numberwithin{equation}{section}
\newtheorem{definition}{\bf Definition}
\newtheorem{proposition}{\bf Proposition}
\newtheorem{assumption}{\bf Assumption}
\journal{}
\begin{document}

\title{On the convection boundedness of numerical schemes across discontinuities}

\author[ad1]{Xi Deng}

\author[ad2]{Zhen-hua Jiang \corref{cor}}

\author[ad1]{Omar K. Matar}

\author[ad2]{Chao Yan}

\address[ad1]{Department of Chemical Engineering,Imperial College London,SW7 2AZ, United Kingdom.}

\address[ad2]{College of Aeronautics Science and Engineering, Beijing University of Aeronautics and Astronautics, Beijing 100191, PR China.}

\cortext[cor]{Corresponding author: Dr. Zhen-Hua Jiang (Email: jiangzhenhua@buaa.edu.cn)}

\begin{abstract}

This short note introduces a novel diagnostic tool for evaluating the convection boundedness properties of numerical schemes across discontinuities. The proposed method is based on the convection boundedness criterion and the normalised variable diagram. By utilising this tool, we can determine the CFL conditions for numerical schemes to satisfy the convection boundedness criterion, identify the locations of over- and under-shoots, optimize the free parameters in the schemes, and develop strategies to prevent numerical oscillations across the discontinuity. We apply the diagnostic tool to assess representative discontinuity-capturing schemes, including THINC, fifth-order WENO, and fifth-order TENO, and validate the conclusions drawn through numerical tests. We further demonstrate the application of the proposed method by formulating a new THINC scheme with less stringent CFL conditions.
\end{abstract}

\begin{keyword}
convection boundedness; discontinuity; THINC; WENO; TENO; CFL

\end{keyword}

\maketitle

\section{Introduction}
Numerical simulations of complex flow systems present significant challenges due to the presence of discontinuities, such as material interfaces in multiphase flows, reaction fronts in combustion, and shock waves in supersonic flows. Numerical methods for resolving discontinuities are generally categorized into tracking and capturing schemes. Discontinuity-capturing schemes are widely used due to their flexibility and ability to extend to higher-order accuracy. However, designing high-resolution discontinuity-capturing schemes is challenging, as Godunov's theorem states that no linear scheme of higher than second order can maintain monotonicity.

Over the decades, significant efforts have been made to overcome Godunov's barrier and develop non-linear, high-resolution discontinuity-capturing schemes. High-order shock-capturing schemes, such as WENO (Weighted Essentially Non-Oscillatory) \cite{liuweno,jiang96,wenoZ_Borges2008}, CWENO (Central WENO) \cite{levy2000compact,capdeville2008central}, and TENO (Targeted Essentially Non-Oscillatory) \cite{fu2016family,fu2017targeted}, have been successfully developed. Recent advancements \cite{wenoZ_Don2013,tsoutsanis2021cweno,dumbser2017central,zhu2018new} have further improved the resolution and robustness of these schemes. Interface-capturing schemes, such as THINC (Tangent Hyperbola for Interface Capturing) \cite{xiao2005simple}, have been applied to compressible flows to enhance the resolution of discontinuous flow structures, such as contact discontinuities, through the BVD (Boundary Variation Diminishing) algorithm \cite{deng2018high,deng2018limiter,deng2019fifth} or discontinuity-detecting criterion \cite{liang2022fifth}. Recent work \cite{deng2023unified,deng2023new} has demonstrated that existing three-cell-based non-linear schemes can be unified into a single framework, from which a new high-resolution scheme, named ROUND (Reconstruction Operator on Unified Normalized-variable Diagram), has been proposed.    

Significant efforts have also been made to understand and optimize the numerical properties of non-linear discontinuity-capturing schemes, such as their spectral properties \cite{adr} and stability \cite{wang2007linear}. A general framework based on a quantitative error metric for evaluating shock-capturing schemes has also been developed \cite{zhao2019general}. To quantify the overshoot error as a function of the CFL number, recent work \cite{zhang2021numerical} introduced error metrics for non-linear shock-capturing schemes. However, limited research has addressed the convection boundedness properties of non-linear schemes across discontinuities. Therefore, this study proposes a diagnostic tool to evaluate and improve the convection boundedness of numerical schemes across discontinuities.

This work is organised as follows. In Section \ref{sec:CBC}, the convection boundedness criterion across discontinuities and the proposed diagnostic method based on the normalised variable diagram are given. In Section \ref{sec:evaluation}, we apply the proposed method to evaluate the representative schemes and validate the conclusions drawn through numerical tests. In Section \ref{sec:improvement}, we demonstrate the application of the proposed method by formulating a new THINC scheme with less stringent CFL conditions. Finally, a brief concluding remark is given in Section \ref{sec:conclusion}.

\section{Convection boundedness criterion across discontinuities} \label{sec:CBC}
We consider the one-dimensional linear advection equation with constant advection velocity over the domain $\Omega$
\begin{equation} \label{eq:control}
\left\{
\begin{array}{ll}
\frac{\partial \phi}{\partial t} + u\frac{\partial \phi }{ \partial x} = 0~~\forall x \in \Omega, \\
\phi(x,t=0)=\phi_0(x),
\end{array}
\right.   
\end{equation}
where $\phi$ is the solution variable and $u$ is the (constant) advection velocity and is assumed $u>0$ here. To solve the above equation with the finite volume method, we define the discrete finite volume cell $\mathcal{I}_i\equiv[x_{i-\frac{1}{2}},x_{i+\frac{1}{2}}]$ and divide the computational domain $\Omega$ into $N$ non-overlapping uniform finite volume cell $\Omega=\cup_{i=1}^N \mathcal{I}_i$ with uniform cell size $h$. We define the discrete cell average value $\bar{\phi}_i$ over cell $\mathcal{I}_i$ as
\begin{equation}
    \bar{\phi}_i (t) = \frac{1}{h} \int_{x_{i-\frac{1}{2}}}^{x_{i+\frac{1}{2}}} \phi(x,t) dx.
\end{equation}
Then the cell average of each cell, $\bar{\phi}_i$, is updated by the following semi-discrete form using the finite volume method
\begin{equation} \label{semi_form_1}
\frac{d \bar{\phi}_i}{d t} = -\frac{u}{h} (\phi_{i+\frac{1}{2}}-\phi_{i-\frac{1}{2}}), 
\end{equation}
where $\phi_{i+\frac{1}{2}}$ and $\phi_{i-\frac{1}{2}}$ are reconstructed value at cell boundary $i+\frac{1}{2}$ and $i-\frac{1}{2}$, respectively. The above ordinary differential equation can be solved using the high-order Runge-Kutta (RK) time scheme. As the first-order explicit Euler time scheme is the building block of the high-order RK schemes, we will consider the first-order explicit Euler time scheme with a constant time step $\Delta t$ to update the solution here. Then the discretized form of Eq.~(\ref{eq:control}) is 
\begin{equation} \label{eq:discreteForm}
  \bar{\phi}_i^{n+1}= \bar{\phi}_i^{n}-c(\phi_{i+\frac{1}{2}}-\phi_{i-\frac{1}{2}}), 
\end{equation}
where $c=\frac{u \Delta t}{h}$ is the CFL number. $\bar{\phi}_i^{n}$ and $\bar{\phi}_i^{n+1}$ are solution at $t^n$ and $t^{n+1}$.

The process to obtain the reconstructed value at the cell boundary from cell average values is well-known as the reconstruction process: $\phi_{i+\frac{1}{2}}=\mathcal{R}(\dotsb,\bar{\phi}_{i-1}^n,\bar{\phi}_{i}^n, \bar{\phi}_{i+1}^n,\dotsb)$. The reconstruction scheme, $\mathcal{R}$, is designed to satisfy several desirable properties depending on the distribution of cell average values $\{ \dotsb,\bar{\phi}_{i-1}^n,\bar{\phi}_{i}^n, \bar{\phi}_{i+1}^n,\dotsb \}$. When the distribution of cell average values is smooth, $\mathcal{R}$ is usually designed to achieve certain accuracy of order $p$ as $(\phi_{i+\frac{1}{2}}^e-\phi_{i+\frac{1}{2}})=O(h^p)$, where $\phi_{i+\frac{1}{2}}^e$ is the exact value. However, when the distribution contains discontinuities, the convection boundedness property is essential for the reconstruction scheme to capture the discontinuity. According to Godunov's theorem, designing non-linear reconstruction schemes is required to simultaneously satisfy the accuracy condition ($p>1$) and convection boundedness. Although it is well-established to design numerical schemes of high order, the analysis tool for the convection boundedness of non-linear schemes across discontinuities is not yet available. This work aims to fill this gap. We first give the following definitions.

\begin{definition} \label{def:discontinuity}
Cell $i$ contains a perfect isolated discontinuity if $\dotsb=\bar{\phi}_{i-2}^n=\bar{\phi}_{i-1}^n\leq\bar{\phi}_{i}^n<\bar{\phi}_{i+1}^n=\bar{\phi}_{i+2}^n=\dotsb$.   
\end{definition}
It is noted that the discontinuity cannot be well defined by only three cells as pointed out by the work \cite{liu2019discontinuity}. The isolated discontinuity can also be defined by $\dotsb=\bar{\phi}_{i-2}^n=\bar{\phi}_{i-1}^n\geq\bar{\phi}_{i}^n>\bar{\phi}_{i+1}^n=\bar{\phi}_{i+2}^n=\dotsb$. For simplicity, we will only consider the first situation here and the conclusion drawn will remain the same.     

\begin{definition}
For the cell $i$ containing a perfect isolated discontinuity and $0<c<1$, the numerical scheme satisfies the convection boundedness criterion across discontinuities if $\dotsb=\bar{\phi}_{i-2}^{n+1}=\bar{\phi}_{i-1}^{n+1}\leq\bar{\phi}_{i}^{n+1}<\bar{\phi}_{i+1}^{n+1}\leq\bar{\phi}_{i+2}^{n+1}=\dotsb$, $\bar{\phi}_{i-2}^{n+1}=\bar{\phi}_{i-2}^{n}$, and $\bar{\phi}_{i+2}^{n+1}=\bar{\phi}_{i+2}^{n}$.       
\end{definition}
It should be noted that the convection boundedness criterion prohibits any overshoot or undershoot across discontinuities, but it permits the discontinuity to be diffused, i.e. $\bar{\phi}_{i-1}^{n+1}\leq\bar{\phi}_{i}^{n+1}<\bar{\phi}_{i+1}^{n+1}\leq\bar{\phi}_{i+2}^{n+1}$. 

\begin{definition}
The normalised cell average value $\tilde{\phi}_{i}$ is defined as $\tilde{\phi}_{i}=\frac{\bar{\phi}_{i}-\bar{\phi}_{i-1}}{\bar{\phi}_{i+1}-\bar{\phi}_{i-1}}$. The normalised reconstructed variable $\tilde{\phi}_{i+1/2}$ is defined as $\tilde{\phi}_{i+1/2}=\frac{\phi_{i+1/2}-\bar{\phi}_{i-1}}{\bar{\phi}_{i+1}-\bar{\phi}_{i-1}}$    
\end{definition}

\begin{proposition}
The necessary conditions for the numerical schemes to satisfy the convection boundedness criterion across the discontinuities are $\tilde{\phi}_{i+1/2}\leq \frac{1}{c} \tilde{\phi}_{i}$ and $\tilde{\phi}_{i+1/2}\leq 1$.     
\end{proposition}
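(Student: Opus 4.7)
The plan is to unpack the CBC definition cell by cell, use the first-order update \eqref{eq:discreteForm} to turn each inequality on the updated averages into a pointwise bound on the reconstructed face value $\phi_{i+1/2}$, and normalize to recover the two stated conditions. The two relevant CBC inequalities will be $\bar{\phi}_{i+1}^{n+1}\le\bar{\phi}_{i+2}^{n+1}$ (which controls no-overshoot on the high side of the jump) and $\bar{\phi}_i^{n+1}\ge\bar{\phi}_{i-1}^{n+1}$ (which controls no-undershoot on the low side). To carry out the translation I would first use the other pieces of the CBC definition to pin down the neighbouring face values: $\bar{\phi}_{i\pm 2}^{n+1}=\bar{\phi}_{i\pm 2}^n$ forces $\phi_{i-3/2}=\phi_{i-5/2}$ and $\phi_{i+3/2}=\phi_{i+5/2}$, and consistency of the scheme on a purely constant stencil then gives $\phi_{i-3/2}=\bar{\phi}_{i-1}^n$ and $\phi_{i+3/2}=\bar{\phi}_{i+1}^n$. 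The CBC chain $\bar{\phi}_{i-1}^{n+1}\ge\bar{\phi}_{i-2}^{n+1}=\bar{\phi}_{i-1}^n$ also yields $\phi_{i-1/2}\le\bar{\phi}_{i-1}^n$.

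With $\phi_{i+3/2}=\bar{\phi}_{i+1}^n$, the update of cell $i+1$ reduces to $\bar{\phi}_{i+1}^{n+1}=(1-c)\bar{\phi}_{i+1}^n+c\,\phi_{i+1/2}$, and imposing $\bar{\phi}_{i+1}^{n+1}\le\bar{\phi}_{i+2}^{n+1}=\bar{\phi}_{i+1}^n$ collapses to $\phi_{i+1/2}\le\bar{\phi}_{i+1}^n$, i.e.\ $\tilde{\phi}_{i+1/2}\le 1$ after dividing by $\bar{\phi}_{i+1}^n-\bar{\phi}_{i-1}^n>0$. For the first bound, subtracting the updates of cells $i$ and $i-1$ gives the identity
\begin{equation*}
c\bigl(\phi_{i+1/2}-2\phi_{i-1/2}+\phi_{i-3/2}\bigr)=(\bar{\phi}_i^n-\bar{\phi}_i^{n+1})+(\bar{\phi}_{i-1}^{n+1}-\bar{\phi}_{i-1}^n),
\end{equation*}
whose right-hand side is bounded above by $\bar{\phi}_i^n-\bar{\phi}_{i-1}^n$ using $\bar{\phi}_i^{n+1}\ge\bar{\phi}_{i-1}^{n+1}$. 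Substituting $\phi_{i-3/2}=\bar{\phi}_{i-1}^n$ and decomposing
\begin{equation*}
c(\phi_{i+1/2}-\bar{\phi}_{i-1}^n)=c(\phi_{i+1/2}-2\phi_{i-1/2}+\bar{\phi}_{i-1}^n)+2c(\phi_{i-1/2}-\bar{\phi}_{i-1}^n),
\end{equation*}
I would bound the first summand by $\bar{\phi}_i^n-\bar{\phi}_{i-1}^n$ and the second by zero (using $\phi_{i-1/2}\le\bar{\phi}_{i-1}^n$), then divide by $c(\bar{\phi}_{i+1}^n-\bar{\phi}_{i-1}^n)$ to obtain $\tilde{\phi}_{i+1/2}\le\tfrac{1}{c}\tilde{\phi}_i$.

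The main obstacle is the step that fixes the neighbouring face values: the natural upwind stencil for $\phi_{i\pm 3/2}$ and $\phi_{i-1/2}$ typically straddles cells that are not all equal to the surrounding constant state, so $\phi_{i-3/2}=\bar{\phi}_{i-1}^n$ is not a tautology. I expect to handle this by combining two ingredients already built into the CBC definition, namely the freezing conditions $\bar{\phi}_{i\pm 2}^{n+1}=\bar{\phi}_{i\pm 2}^n$ and the monotone chain $\bar{\phi}_{i-2}^{n+1}\le\bar{\phi}_{i-1}^{n+1}\le\bar{\phi}_i^{n+1}$, with the minimal scheme-side hypothesis that the reconstruction reproduces a constant state on a constant stencil. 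The telescoping identity used for the first condition is attractive precisely because it sidesteps any need to pin $\phi_{i-1/2}$ to equality and uses only the inequalities supplied by CBC itself.
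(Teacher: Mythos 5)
Your proposal is correct, and its skeleton matches the paper's: unpack the CBC through the explicit Euler update, pin the outer face values using the freezing conditions plus consistency on constant data, and normalise the resulting inequalities at cells $i$ and $i+1$. Your derivation of $\tilde{\phi}_{i+1/2}\leq 1$ is essentially the paper's (the paper pins $\phi_{i+3/2}=\bar{\phi}_{i+1}^{n}$ and imposes $\bar{\phi}_{i+1}^{n+1}\leq\bar{\phi}_{i+2}^{n+1}$, exactly as you do). For $\tilde{\phi}_{i+1/2}\leq \frac{1}{c}\tilde{\phi}_{i}$ you take a mildly different path: the paper exploits the fact that the CBC's left chain is an \emph{equality}, $\bar{\phi}_{i-2}^{n+1}=\bar{\phi}_{i-1}^{n+1}=\bar{\phi}_{i-1}^{n}$, which together with the update of cell $i-1$ pins $\phi_{i-1/2}=\phi_{i-3/2}=\bar{\phi}_{i-1}^{n}$ exactly, so that the single inequality $\bar{\phi}_{i-1}^{n+1}\leq\bar{\phi}_{i}^{n+1}$ applied to cell $i$'s update gives the condition in one line; you instead keep only the one-sided bound $\phi_{i-1/2}\leq\bar{\phi}_{i-1}^{n}$ and recover the same inequality through your telescoped two-cell identity. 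Both are sound; the paper's is shorter because the equality is supplied by the definition anyway, while yours is marginally more robust (it would survive relaxing the left equality in the CBC to monotonicity) and, more usefully, makes explicit the scheme-side hypothesis the paper compresses into the assertion that $\phi_{i-3/2}=\phi_{i-1/2}=\bar{\phi}_{i-1}^{n}$ ``also follows from convection boundedness'' --- namely that reconstruction reproduces constants, so that telescoping the frozen cells transports the constant state to $\phi_{i-3/2}$ and $\phi_{i+3/2}$. One point to tighten when writing it out: as you flag yourself, the freeze of cell $i\pm 2$ alone only gives $\phi_{i\pm 3/2}=\phi_{i\pm 5/2}$, and the stencil of $\phi_{i\pm 5/2}$ can still straddle the jump, so the telescoping must run over \emph{all} the frozen cells in the ``$\dotsb$'' of the CBC definition until the reconstruction stencil clears the discontinuity.
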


\begin{proof}
By convection boundedness, $\bar{\phi}_{i-2}^{n+1}=\bar{\phi}_{i-1}^{n+1}=\bar{\phi}_{i-1}^{n}$,  and it then follows from 
Eq. (\ref{eq:discreteForm}) that we have $\phi_{i-3/2}^n=\phi_{i-1/2}^n=\bar{\phi}_{i-1}^{n}$; the latter equality also follows from convection boundedness. Then,
\begin{equation*}
    \bar{\phi}_{i-1}^{n+1}\leq\bar{\phi}_{i}^{n+1},~\Rightarrow ~ \bar{\phi}_{i-1}^{n}\leq c \bar{\phi}_{i-1}^n-c\phi_{i+1/2}^n +\bar{\phi}_{i}^n,~\Rightarrow ~ c \frac{\phi_{i+1/2}^n-\bar{\phi}_{i-1}^n}{\bar{\phi}_{i+1}^n-\bar{\phi}_{i-1}^n}\leq\frac{\bar{\phi}_{i+1}^n-\bar{\phi}_{i-1}^n}{\bar{\phi}_{i+1}^n-\bar{\phi}_{i-1}^n},~\Rightarrow ~ \tilde{\phi}_{i+1/2}^n\leq \frac{1}{c} \tilde{\phi}_{i}^n.     
\end{equation*}
Similarly, as $\bar{\phi}_{i+2}^{n+1}=\bar{\phi}_{i+3}^{n+1}=\bar{\phi}_{i+1}^{n}$. we have $\phi_{i+3/2}^n=\phi_{i+1/2}^n=\bar{\phi}_{i+1}^{n}$. Then
\begin{equation*}
 \bar{\phi}_{i+1}^{n+1}\leq\bar{\phi}_{i+2}^{n+1},~\Rightarrow ~ c \phi_{i+1/2}^n-c \bar{\phi}_{i+1}^{n} + \bar{\phi}_{i+1}^{n}\leq \bar{\phi}_{i+1}^{n},~\Rightarrow ~ \tilde{\phi}_{i+1/2}^n\leq 1.    
\end{equation*}

\end{proof}
The necessary condition of $\tilde{\phi}_{i+1/2}\leq \frac{1}{c} \tilde{\phi}_{i}$ determines the maximum CFL number for a numerical scheme to satisfy the convection boundedness criterion. Furthermore, when a numerical scheme violates the convection boundedness criterion, the location of the resulting overshoot (or undershoot) depends on the specific condition that has been violated. We will illustrate these remarks in the following sections. 

Based on the definitions of discontinuity and the convection boundedness criterion, the key to evaluating the convection boundedness of the scheme lies in identifying the normalised reconstructed variable, $\tilde{\phi}_{i+1/2}$, for the corresponding schemes where $0 < \tilde{\phi}_i < 1$. To achieve this, we propose a diagnostic tool by using the normalised variable diagram which shows the variation of $\tilde{\phi}_{i+1/2}$ with $\tilde{\phi}_i$ for different schemes.

\section{Evaluation of representative shock-capturing schemes} \label{sec:evaluation}
In this section, we will examine the characteristics of representative non-linear shock-capturing schemes across discontinuities using the above convection boundedness criterion. The first scheme we examine here is the THINC scheme. As shown by the work \cite{deng2023unified}, the normalised reconstructed variable of the THINC scheme across the discontinuity is
\begin{equation}
  \tilde{\phi}_{i+1/2}= \dfrac{\sinh(\beta)+\cosh(\beta)-\text{exp}(\beta(-2.0\tilde{\phi}_i+1.0))}{2\sinh(\beta)},  
\end{equation}
where $\beta$ is the parameter to control the numerical errors. A higher value of $\beta$ introduces greater numerical anti-diffusion. The normalised variable diagram for the THINC scheme across the discontinuity with different $\beta$ values is presented in Fig.~\ref{fig:THINCCFL}. It is observed that as $\beta$ increases, the maximum CFL number allowed for the THINC scheme to satisfy the convection boundedness criterion decreases. For instance, when $\beta = 1.1$, the maximum CFL number must not exceed 0.5. Similarly, for $\beta = 2.0$, the maximum CFL number is limited to 0.3. 

We further perform an advection test of a square wave to validate the proposed normalised variable diagram. The computational domain is $[-1,1]$. The square wave is initialised as 
\begin{equation} \label{eq:initial}
\phi(x,t=0)=\left\{
\begin{array}{ll}
1.0,~~-0.4 \leq x \leq 0.4, \\
0.0,~~\text{otherwise}.
\end{array}
\right.   
\end{equation}
The computation is run up to $t=0.1$. The numerical solutions using the THINC scheme with different $\beta$ values at various CFL numbers are presented in Fig.~\ref{fig:advectionTHINC}. It can be observed that the THINC scheme with $\beta=1.1$ at CFL=0.5 and the THINC scheme with $\beta=2.0$ at CFL=0.3 produce over- and under-shoots, consistent with the conclusions drawn from the normalised variable diagram. Furthermore, the over- and under-shoots occur at $\tilde{\phi}_i<0.5 $, where the condition of $\tilde{\phi}_{i+1/2}\leq \frac{1}{c} \tilde{\phi}_{i}$ is violated.


\begin{figure}[htbp] 
\begin{centering}
\subfigure[Global view]{\includegraphics[scale=0.4,trim={0.3cm 0.5cm 0.5cm 0.3cm},clip]{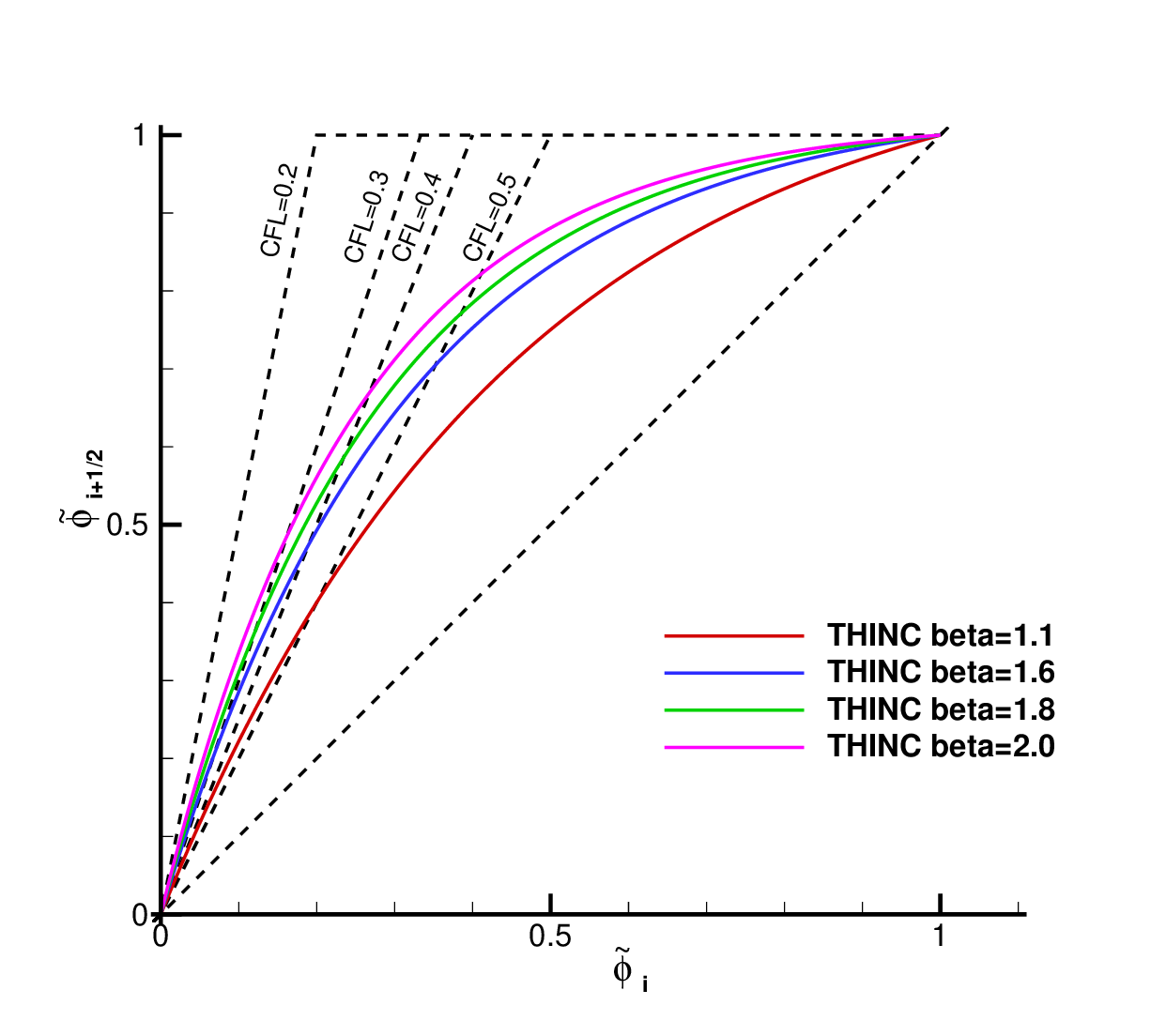}}
\subfigure[Zoomed view ]{\includegraphics[scale=0.4,trim={0.3cm 0.5cm 0.5cm 0.3cm},clip]{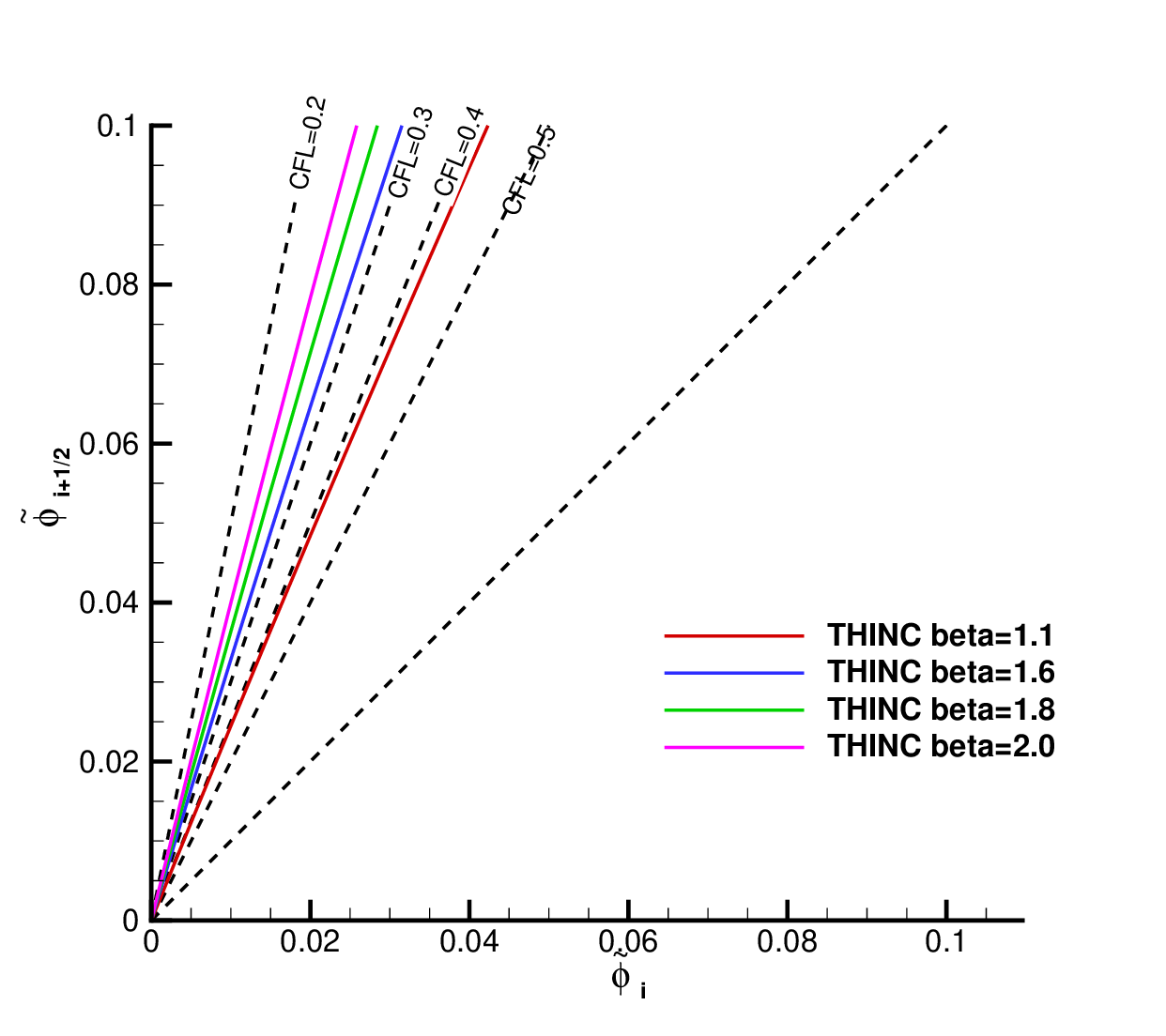}}
\caption{The variation of $\tilde{\phi}_{i+1/2}$ with $\tilde{\phi}_i$ for the THINC scheme across the discontinuity with different $\beta$ value. \label{fig:THINCCFL}}
\par\end{centering}
\end{figure}

\begin{figure}[htbp] 
\begin{centering}
\subfigure[THINC $\beta=1.1$, CFL=0.4 and 0.5]{\includegraphics[scale=0.4,trim={0.5cm 0.5cm 0.5cm 0.3cm},clip]{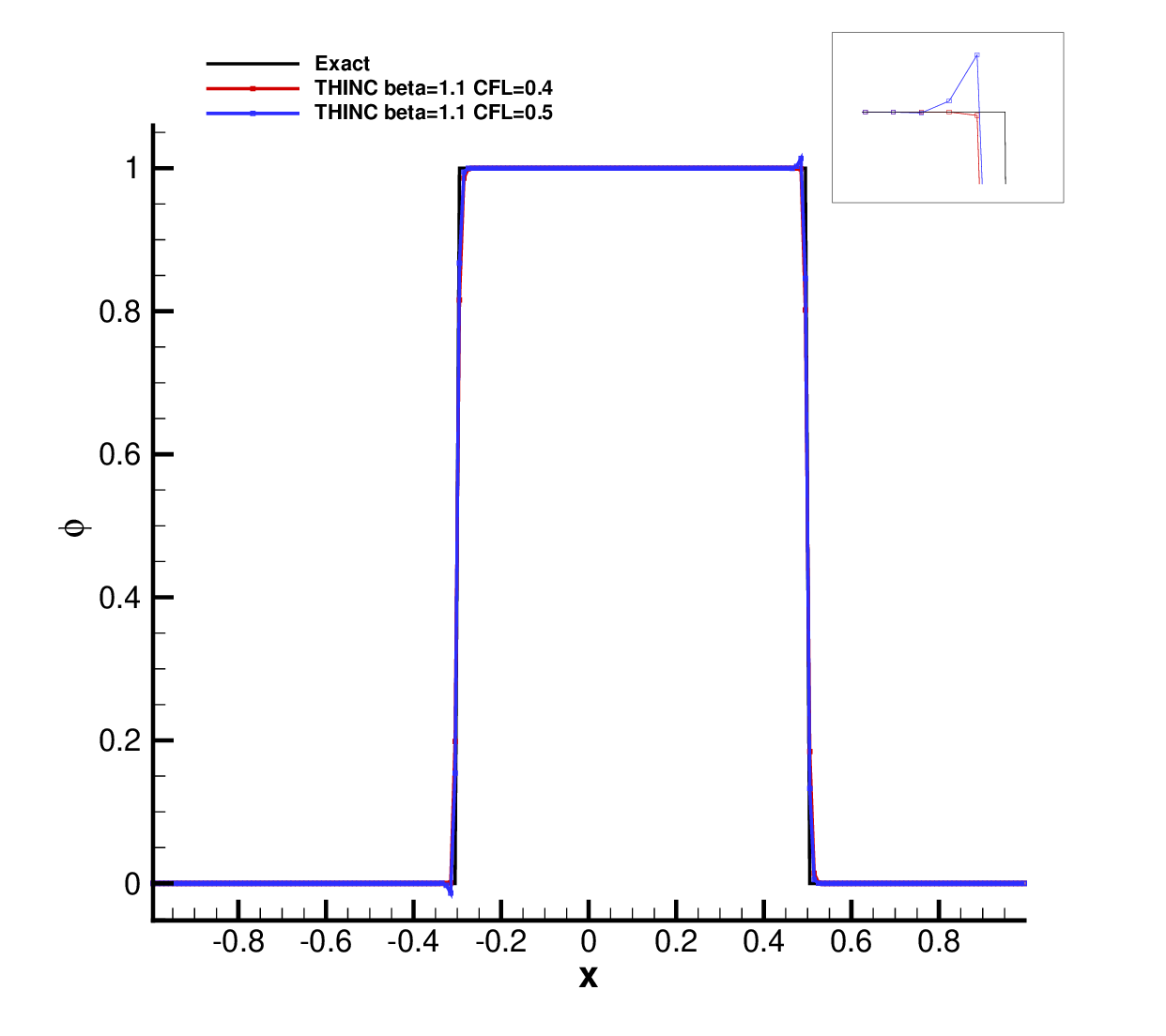}}
\subfigure[THINC $\beta=2.0$, CFL=0.2 and 0.3]{\includegraphics[scale=0.4,trim={0.5cm 0.5cm 0.5cm 0.3cm},clip]{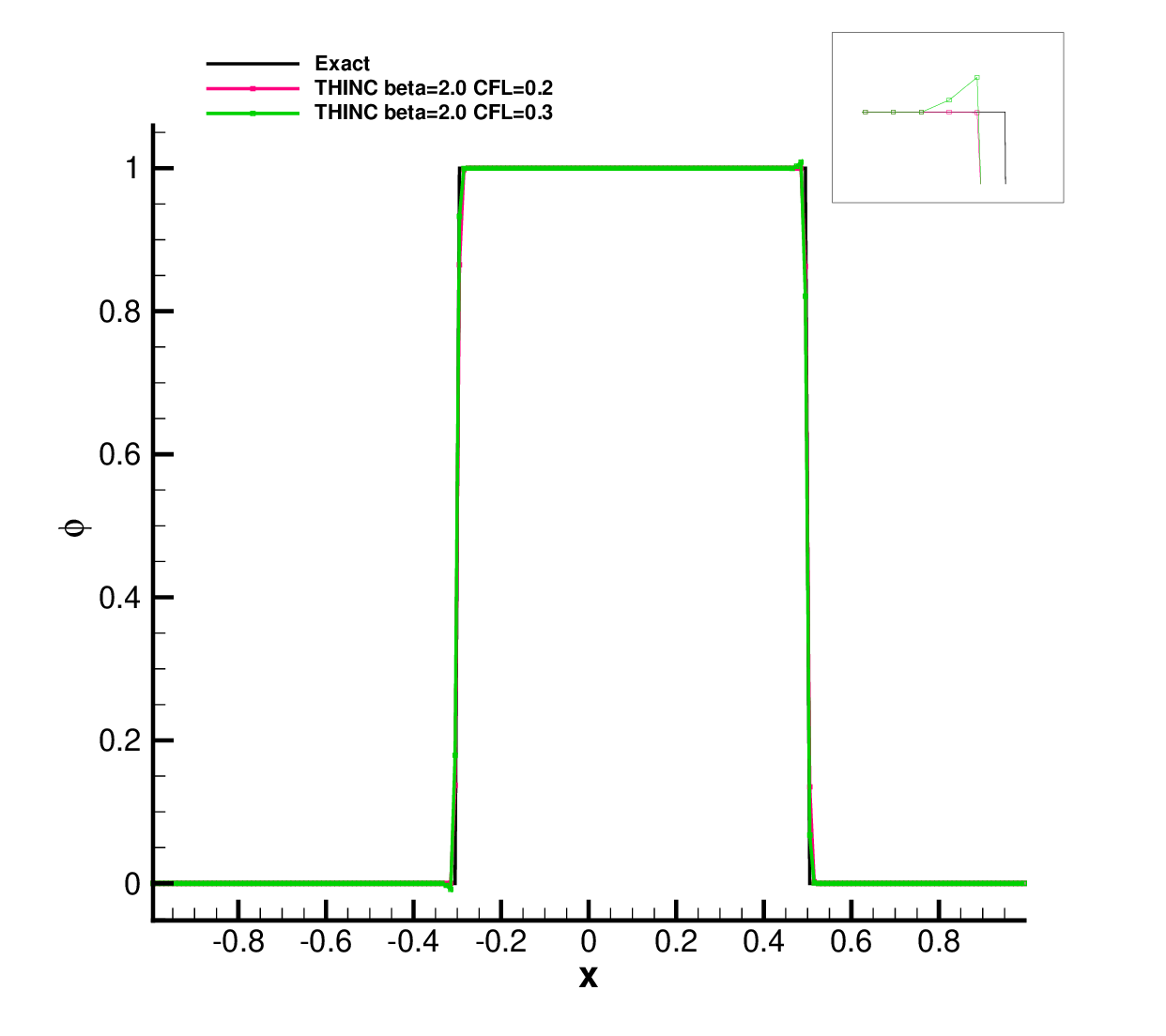}}
\caption{Numerical solutions for the advection of a square wave using the THINC scheme with different $\beta$ values at various CFL numbers.  \label{fig:advectionTHINC}}
\par\end{centering}
\end{figure}

Furthermore, we evaluate the widely used shock-capturing schemes, including WENO and TENO. The reconstructed variable of the WENO and TENO scheme is a non-linear combination of reconstructed values from different stencils, i.e. 
\begin{equation}
    \phi_{i+1/2}=\sum_{k=0}^{2} \omega_k \phi_{i+1/2,k}=\omega_0(\frac{1}{3}\bar{\phi}_{i-2}-\frac{7}{6}\bar{\phi}_{i-1}+\frac{11}{6}\bar{\phi}_{i})+\omega_1(-\frac{1}{6}\bar{\phi}_{i-1}+\frac{5}{6}\bar{\phi}_{i}+\frac{1}{3}\bar{\phi}_{i+1})+ \omega_2(\frac{1}{3}\bar{\phi}_{i}+\frac{5}{6}\bar{\phi}_{i+1}-\frac{1}{6}\bar{\phi}_{i+2}),
\end{equation}
where $\omega_k$ are the non-linear weights and $\phi_{i+1/2,k}$ are the reconstructed value from different stencils. In \cite{jiang96}, the non-linear weights are defined as 
\begin{equation}
    \omega_k=\frac{\alpha_k}{\sum_{k=0}^2 \alpha_k},~~\alpha_k=\frac{d_k}{(IS_k+\epsilon)^2},
\end{equation}
where $d_k$ are ideal weights, $IS_k$ are local smoothness indicators, and $\epsilon$ is a small value to prevent division by zero. The resulting WENO scheme is denoted as WENOJS5. The work \cite{wenoZ_Borges2008} designs another weight as
\begin{equation}
    \alpha_k=d_k\Bigl(1+(\frac{\tau_5}{IS_k+\epsilon})^2\Bigl),~~\tau_5=|IS_0-IS_2|.
\end{equation}
The resulting WENO scheme with the weight mentioned above is typically referred to as WENOZ5. In \cite{fu2016family}, a new fifth-order low-dissipation weighting strategy named TENO5 is developed. The nonlinear weights for the TENO5 scheme are formulated as
\begin{equation}
\omega_k=\frac{d_k \delta_k}{\sum_{k=0}^2 d_k \delta_k},~~\delta_k=\left\{
\begin{array}{ll}
0,~~\text{if}~\chi_k<C_T, \\
1,~~\text{otherwise},
\end{array}
\right. ~~\chi_k=\frac{\gamma_k}{\sum_{k=0}^2 \gamma_k},~~\gamma_k=\Bigl(C+\frac{\tau_5}{IS_k+\epsilon}\Bigl)^q,
\end{equation}
where parameters $C=1$ and $q=6$ are used in the work \cite{fu2016family}. $C_T$ is the cutoff parameter. The influence of $C_T$ has been studied in previous work \cite{fu2016family}, showing that a smaller $C_T$ results in less dissipative results. In this work, we will, for the first time, demonstrate how $C_T$ influences the convection bounded criterion of the TENO5 scheme across discontinuity.  

Unlike the THINC scheme, the $\tilde{\phi}_i$–$\tilde{\phi}_{i+1/2}$ relationship in the WENO and TENO scheme cannot be directly expressed in a closed form. Here, we make the following assumption.  
\begin{assumption}
    We assume that the WENOJS5, WENOZ5 and TENO5 schemes are scale-independent.
\end{assumption}
Such an assumption is reasonable within the range where the smoothness indicator $IS_k\gg\epsilon$. Using this assumption and Definition \ref{def:discontinuity}, we can generate the normalised variable diagram for the WENO and TENO schemes across the discontinuity by setting $\bar{\phi}_{i-2}=\bar{\phi}_{i-1}=0$ and $\bar{\phi}_{i+1}=\bar{\phi}_{i+2}=1$, while sampling $\bar{\phi}_i$ between 0 and 1. The normalised variable diagram for the WENOJS5 and WENOZ5 schemes across the discontinuity is presented in Fig.~\ref{fig:WENOTENOCFL}(a), with $\bar{\phi}_i$ uniformly sampled between 0 and 1 at 100 points. Fig.~\ref{fig:WENOTENOCFL}(a) indicates that WENOZ5 requires a stricter CFL number (maximum CFL$\approx0.4$) to satisfy the convection boundedness condition compared to WENOJS5. Similarly, we generate the normalized variable diagram for the TENO5 scheme across the discontinuity using two different values of the parameter, i.e., $C_T = 10^{-5}$ and $C_T = 10^{-7}$. The generated diagram is presented in Fig.~\ref{fig:WENOTENOCFL}(b). The diagram shows that TENO5 using $C_T = 10^{-7}$ requires a stricter CFL number to satisfy $\tilde{\phi}_{i+1/2}\leq \frac{1}{c} \tilde{\phi}_{i}$ than the scheme using $C_T = 10^{-5}$. Moreover, TENO5 using $C_T = 10^{-7}$ violates the condition $\tilde{\phi}_{i+1/2}\leq 1$, indicating that the overshoots cannot be eliminated by reducing the CFL number. Thus, $C_T = 10^{-7}$ is unsuitable for the fifth-order TENO scheme in capturing discontinuities. 

\begin{figure}[htbp] 
\begin{centering}
\subfigure[WENOJS5 and WENOZ5]{\includegraphics[scale=0.4,trim={0.5cm 0.5cm 0.5cm 0.3cm},clip]{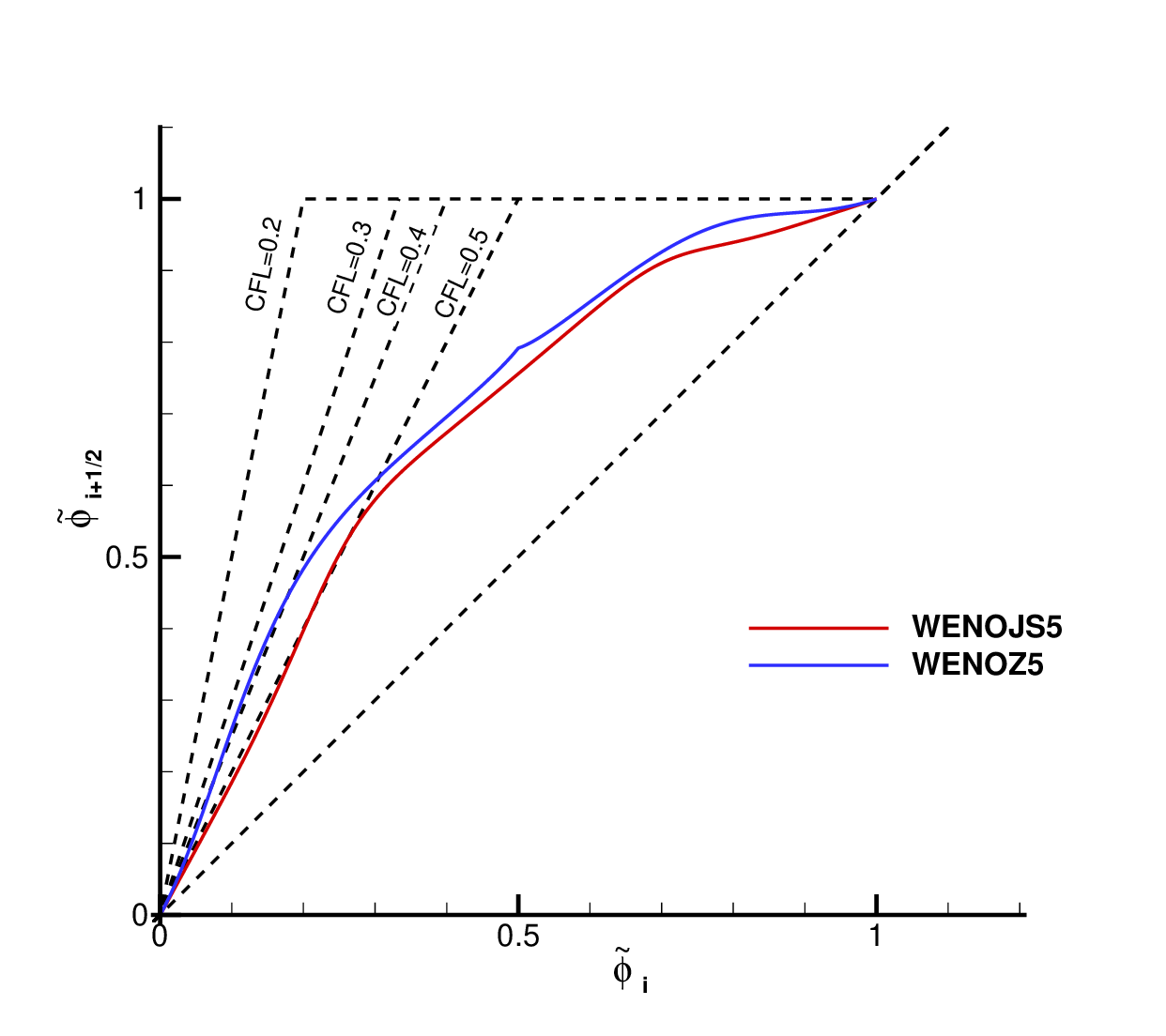}}
\subfigure[TENO5 using $C_T = 10^{-5}$ and $C_T = 10^{-7}$ ]{\includegraphics[scale=0.4,trim={0.5cm 0.5cm 0.5cm 0.3cm},clip]{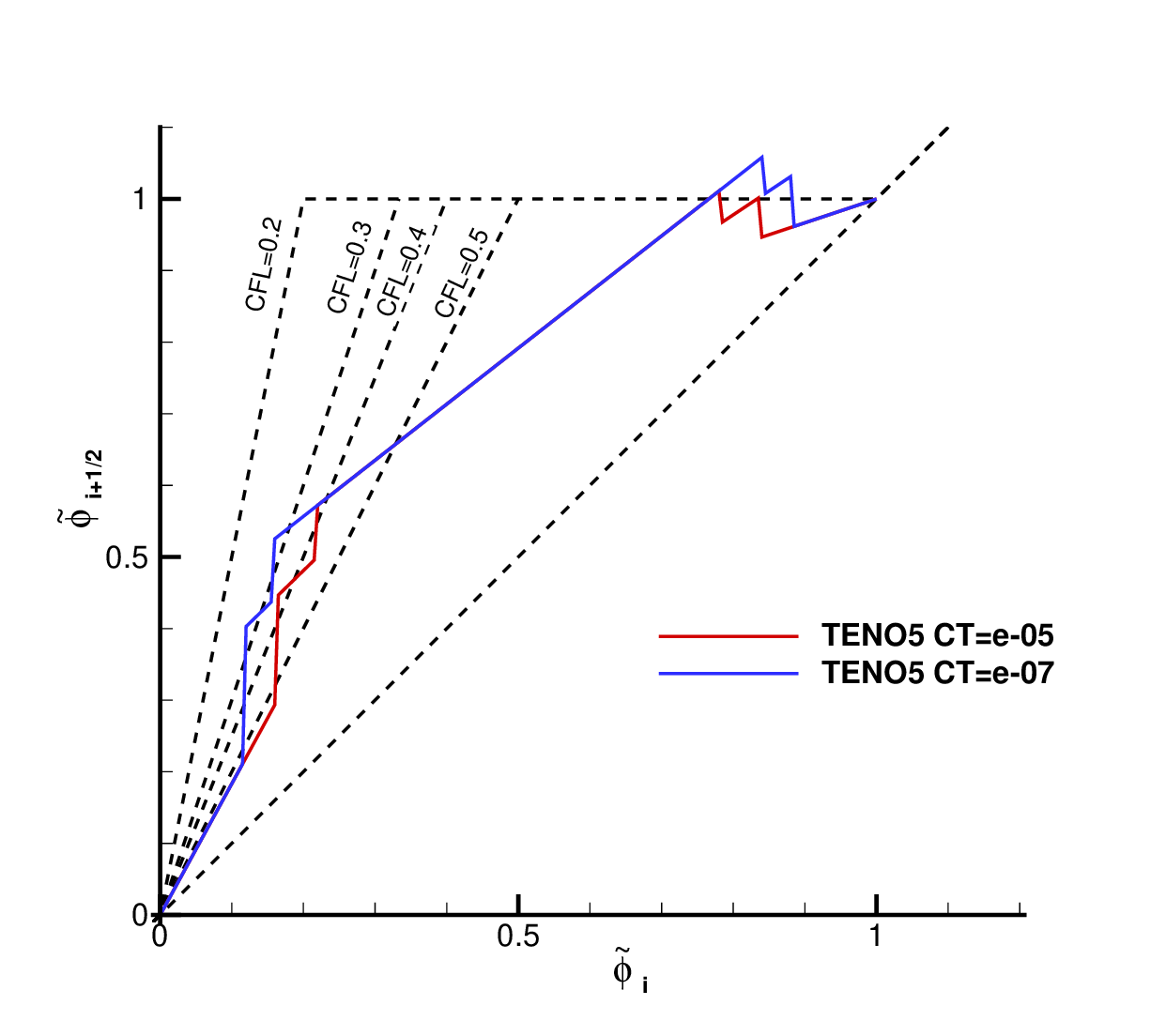}}
\caption{The normalised variable diagram for the WENOJS5, WENOZ5 and TENO5 schemes across the discontinuity.  \label{fig:WENOTENOCFL}}
\par\end{centering}
\end{figure}

An advection test of a square wave is performed to illustrate the conclusion drawn from the normalised variable diagram. Fig.~\ref{fig:advectionWT}(a) shows that WENOZ5 requires a stricter CFL number (maximum CFL$\approx0.4$) than WENOJS5 to prevent over- and under-shoots. This observation is consistent with the conclusion drawn from the normalized variable diagram. In Fig.~\ref{fig:advectionWT}(b), we show the results produced by the TENO schemes using $C_T=10^{-5}$ and $C_T=10^{-7}$. It can be observed that TENO using $C_T=10^{-5}$ is convection bounded at CFL=0.5, while TENO using $C_T=10^{-7}$ produces over- and under-shoots at CFL=0.4. These over- and under-shoots occur both when $\tilde{\phi}_{i}$ is close to 0 and when $\tilde{\phi}_{i}$ is close to 1. We reduce the CFL number to 0.1 for the TENO scheme using $C_T=10^{-7}$ and show the results in Fig.~\ref{fig:advectionWT}(c). It can be observed that over- and under-shoots when $\tilde{\phi}_{i}$ is close to 0 can be eliminated by reducing the CFL number, as the condition $\tilde{\phi}_{i+1/2}\leq \frac{1}{c} \tilde{\phi}_{i}$ is satisfied with a smaller CFL number. However, over- and under-shoots when $\tilde{\phi}_{i}$ is close to 1 cannot be eliminated, as TENO5 using $C_T=10^{-7}$ violates the condition $\tilde{\phi}_{i+1/2}\leq 1$ even with a smaller CFL number. This observation about the TENO5 scheme is consistent with the conclusion from the normalised variable diagram.        

\begin{figure}[htbp] 
\begin{centering}
\subfigure[WENOJS5 and WENOZ5 at various CFL number]{\includegraphics[scale=0.4,trim={0.5cm 0.5cm 0.5cm 0.3cm},clip]{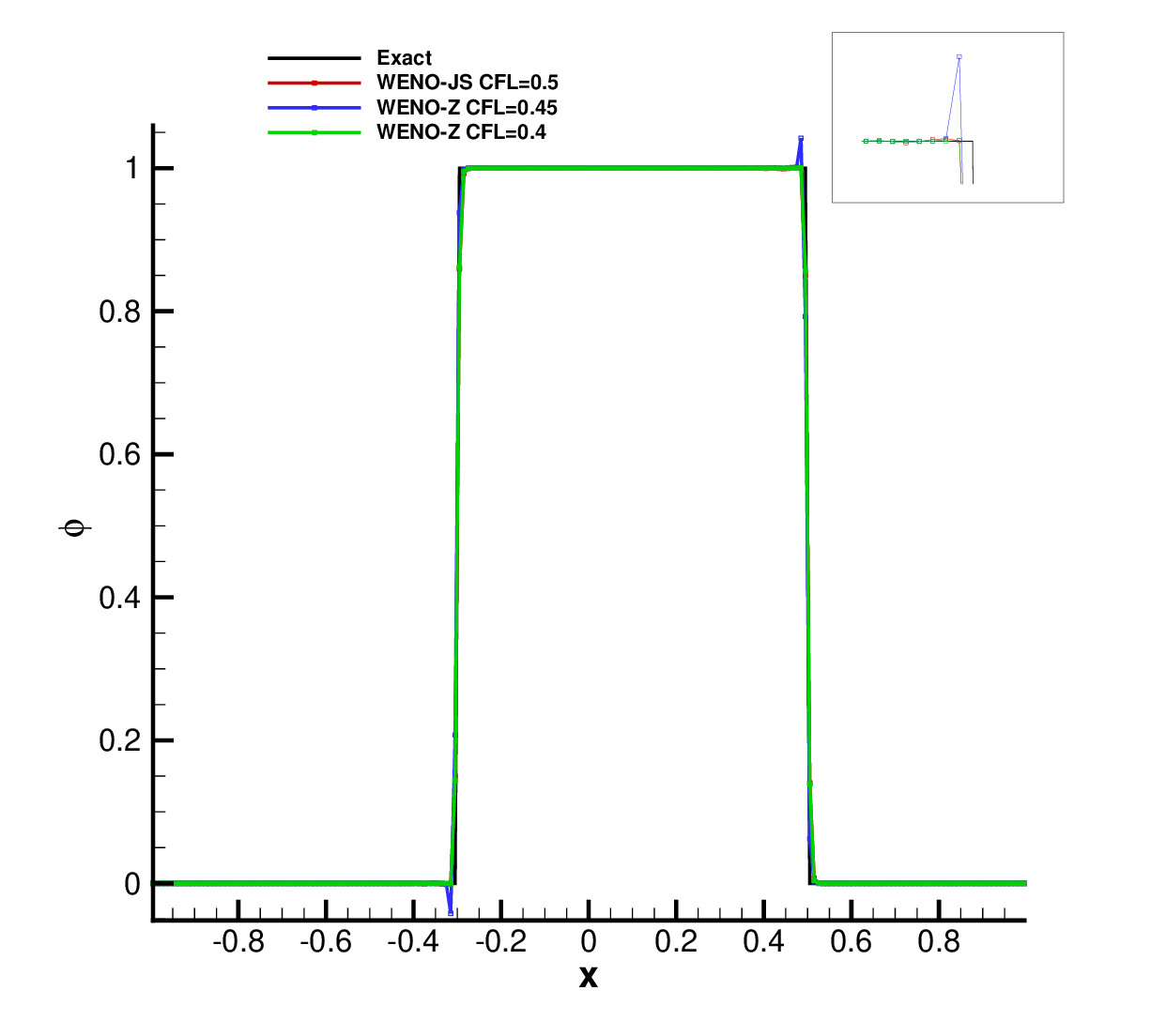}}
\subfigure[TENO5 using $C_T=10^{-5}$ and $C_T=10^{-7}$ ]{\includegraphics[scale=0.4,trim={0.5cm 0.5cm 0.5cm 0.3cm},clip]{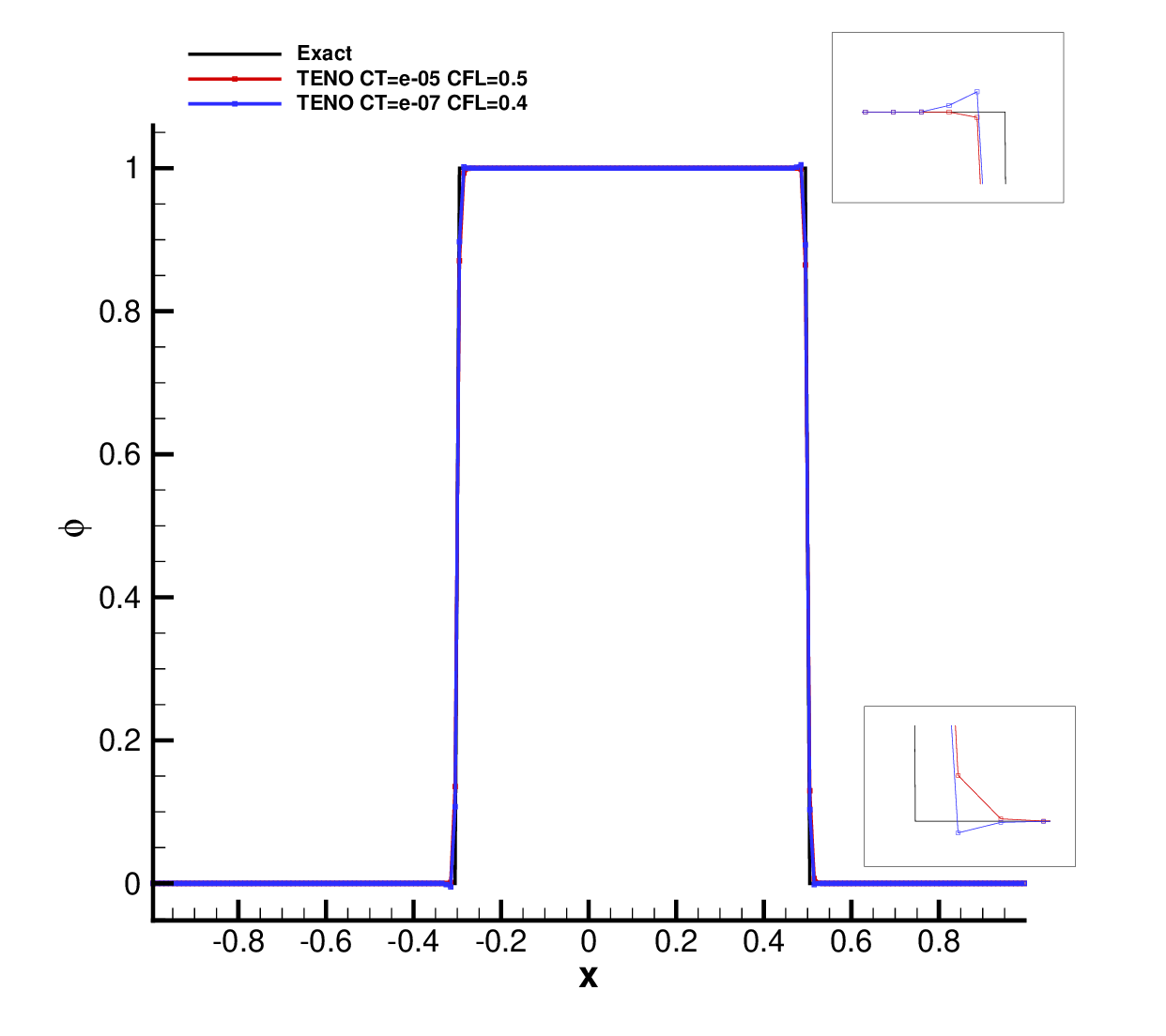}}
\subfigure[TENO5 using $C_T=10^{-7}$ at CFL=0.4 and CFL=0.1 ]{\includegraphics[scale=0.4,trim={0.5cm 0.5cm 0.5cm 0.3cm},clip]{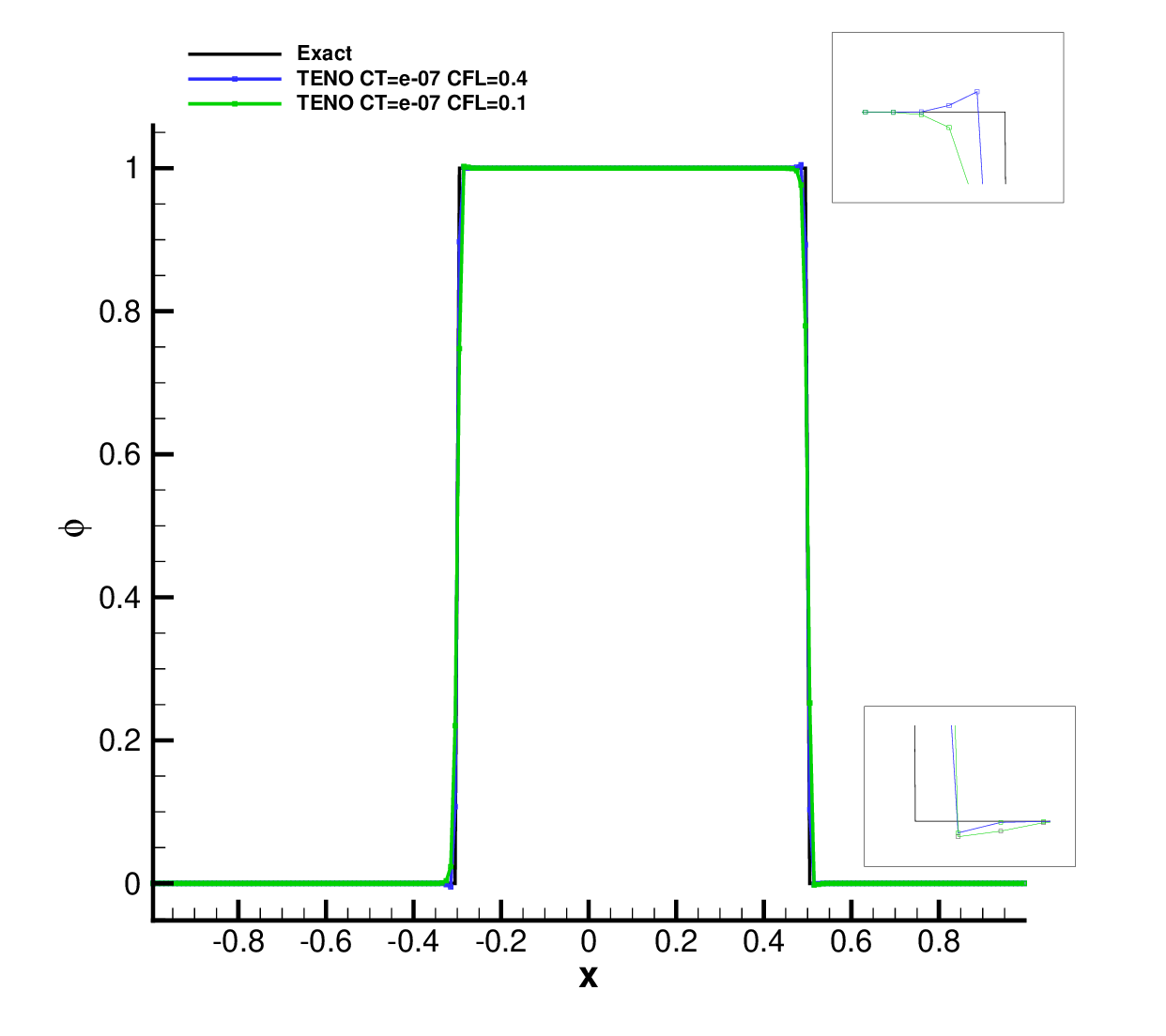}}
\caption{Numerical solutions for the advection of a square wave using the WENO and TENO scheme with different parameters at various CFL numbers.  \label{fig:advectionWT}}
\par\end{centering}
\end{figure}

\section{Improvement of convection boundedness of THINC schemes} \label{sec:improvement}
In this section, we will demonstrate how to improve the convection boundedness property of the THINC scheme by using the proposed normalised variable diagram. The THINC scheme which is less constrained by the CFL condition is formulated as
\begin{equation}
   \tilde{\phi}_{i+1/2}=\text{min} \Bigl( \dfrac{\sinh(\beta)+\cosh(\beta)-\text{exp}(\beta(-2.0\tilde{\phi}_i+1.0))}{2\sinh(\beta)}, 2.5\tilde{\phi}_i\Bigl).    
\end{equation}
According to the convection boundedness criterion, the new formulation allows the maximum CFL=0.4. We conduct the 1D advection test of a square wave and 2D Zalesak test \cite{zalesak1979fully,rudman1997volume} using the original and new THINC schemes with $\beta=2.0$. The numerical solutions of the 1D advection test are presented in Fig.~\ref{fig:advectionNewT}(a). It is observed that the original THINC scheme produces over- and under-shoots at CFL=0.4, while the new THINC scheme remains bounded. The numerical solutions of the 2D Zalesak test using the original and new THINC schemes are presented in Fig.~\ref{fig:advectionNewT}(b) and Fig.~\ref{fig:advectionNewT}(c), respectively. The results indicate that the new THINC scheme can capture sharp discontinuity while remaining bounded, whereas the original THINC scheme produces over- and under-shoots.    

\begin{figure}[htbp] 
\begin{centering}
\subfigure[Original and new THINC scheme, $\beta=2.0$, CFL=0.4]{\includegraphics[scale=0.4,trim={0.5cm 0.5cm 0.5cm 0.3cm},clip]{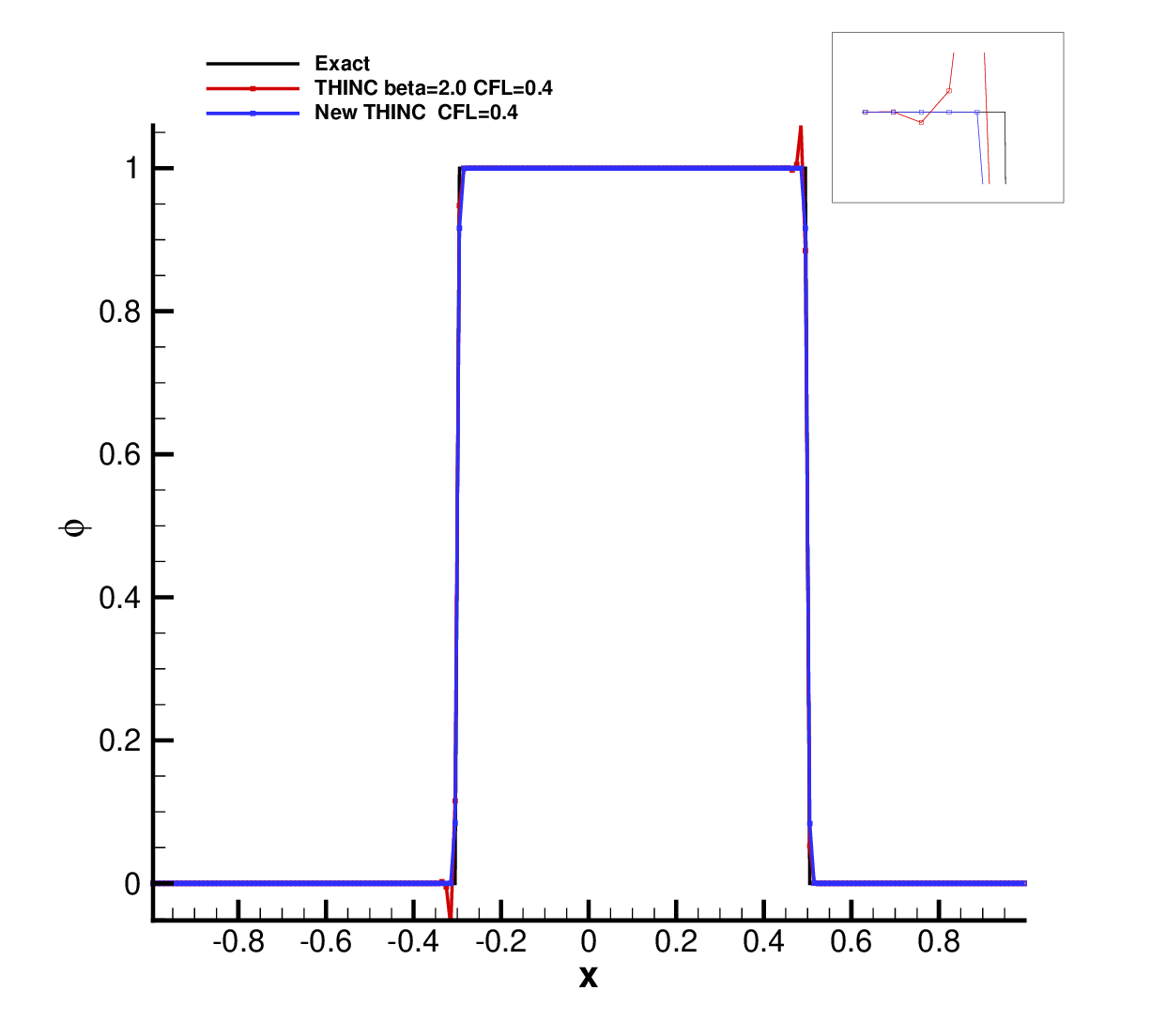}}
\subfigure[Original THINC ]{\includegraphics[scale=0.4,trim={0.5cm 0.5cm 0.5cm 0.3cm},clip]{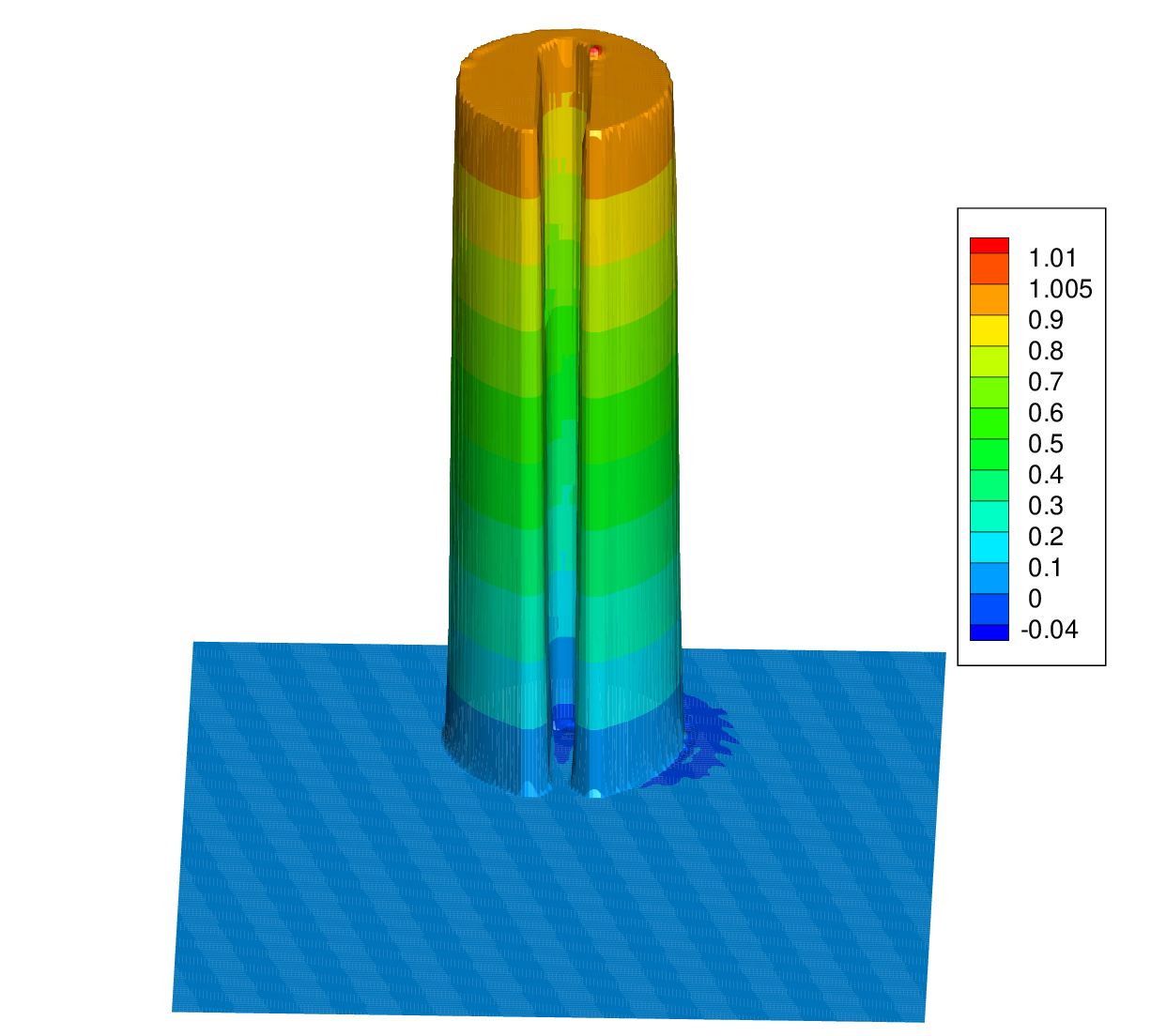}}
\subfigure[New THINC]{\includegraphics[scale=0.4,trim={0.5cm 0.5cm 0.5cm 0.3cm},clip]{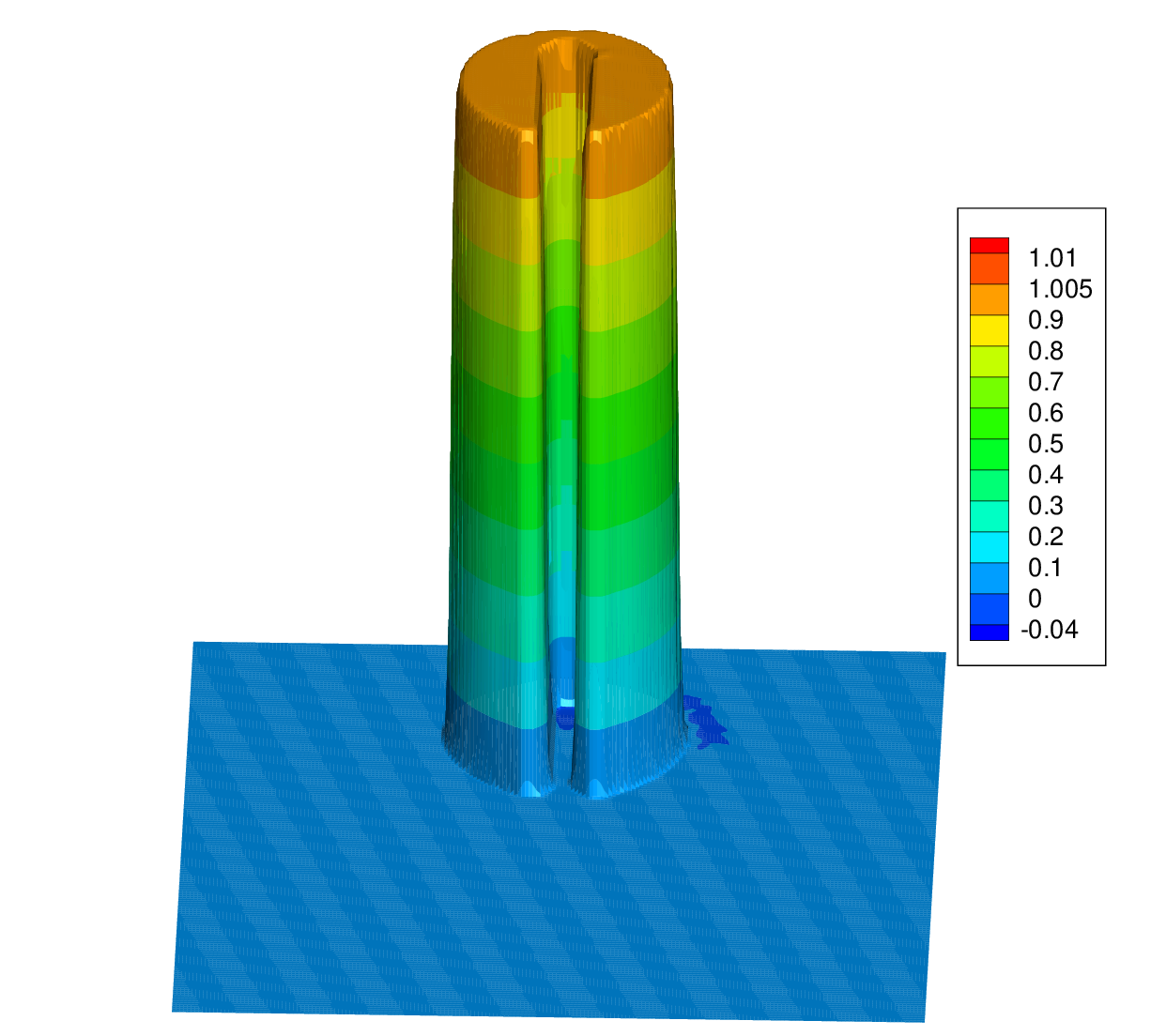}}
\caption{Comparisons between the original THINC scheme and the new THINC scheme which is less constrained by the CFL number. (a) 1D advection of a square wave. (b)(c) 2D Zalesak test.   \label{fig:advectionNewT}}
\par\end{centering}
\end{figure}

\section{Concluding remarks}\label{sec:conclusion}
This work proposes a novel method to evaluate and improve the convection boundedness of numerical schemes across discontinuities. We demonstrate that we can use the proposed method to determine the CFL conditions, identify the locations of over- and under-shoots, optimize the free parameters in the schemes, and develop new schemes with less stringent CFL conditions.

\section*{Acknowledgements}
XD and OKM acknowledge the funding provided by the Engineering and Physical Sciences Research Council and First Light Fusion through the ICL-FLF
Prosperity Partnership (grant number EP/X025373/1).  

\clearpage{}

\bibliographystyle{elsarticle-num-names.bst}
\bibliography{reference.bib}

\end{document}